\renewcommand{\le}{\leqslant}
\renewcommand{\ge}{\geqslant}
\renewcommand{\leq}{\leqslant}
\renewcommand{\geq}{\geqslant}
\newcommand{\e}{\varepsilon}
\theoremstyle{plain}
  \newtheorem{lemma}{Lemma}
  \newtheorem{theorem}[lemma]{Theorem}
  \newtheorem{corollary}[lemma]{Corollary}
  \theoremstyle{definition}
  \newtheorem{observation}[lemma]{Observation}
  \newtheorem{remark}[lemma]{Remark}
\newcommand{\pr}{\mathbb{P}}
\newcommand{\jnote}[1]{}
\newcommand{\E}{{\mathbb E}}
\newcommand{\Lip}{\mathrm{Lip}}
\newcommand{\dist}{\mathsf{dist}}
\newcommand{\supp}{\mathrm{supp}}
\newcommand{\remove}[1]{}
\newcommand{\eps}{\varepsilon}
\newcommand{\ceiling }[1]{{\left\lceil{#1}\right\rceil}}
\newcommand{\I}{{\textrm{I}}}
\newcommand{\II}{{\textrm{II}}}
\newcommand{\III}{{\textrm{III}}}
\newcommand{\IV}{{\textrm{IV}}}
\begin{document}

\title{A lower bound on dimension reduction for trees in $\ell_1$}
\author{James R. Lee}
\address{Computer Science \& Engineering, University of Washington} \email{jrl@cs.washington.edu}
\author{Mohammad Moharrami}
\address{Computer Science \& Engineering, University of Washington} \email{mohammad@cs.washington.edu}

\begin{abstract} There is a constant $c > 0$ such that
for every $\eps \in (0,1)$ and $n \geq 1/\e^2$, the following holds.
Any mapping from the $n$-point star metric into $\ell_1^d$
with bi-Lipschitz distortion $1+\eps$ requires dimension $$d \geq {c\log n\over \eps^2\log (1/\eps)}\,.$$
\end{abstract}

\maketitle

%\section{Introduction}
% NEEDS INTRODUCTION
% Upper bound for trees
% Lower bound for $\ell_1$
% JL
% Lower bound for $\ell_2$}

\section{Introduction}

Consider an integer $n \geq 1$.  The {\em $n$-node star} is the simple, undirected graph $G_n = (V_n, E_n)$
with $|V_{n}|=n$, where one node has degree $n-1$ and all other nodes have degree one.  We write $\rho_n$
for the shortest-path metric on $G_n$ where each edge is
equipped with a unit weight.
We use $\ell_1^d$ to denote the space $\mathbb R^d$ equipped with the $\ell_1$ norm.
Our main theorem follows.

\begin{theorem}\label{thm:main}
There is a constant $c > 0$ such that the following holds.
Consider any $\eps\in(0,\frac{1}{16})$ and  $n\geq  {1/\eps^2}$.
Suppose there exists a $1$-Lipschitz mapping $f : V_n \to \ell_1^d$
such that $\|f(x)-f(y)\|_1 \geq (1-\e) \rho_n(x,y)$ for all $x,y \in V_n$.
Then,
$$
d \geq  {c\log n\over{\eps^2\log(1/\eps)}}\,.
$$
\end{theorem}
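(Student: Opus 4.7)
The plan is to first convert the embedding problem into a clean combinatorial one. Translating so that $f(c) = 0$ and writing $x_i = f(\ell_i) \in \mathbb R^d$, the $1$-Lipschitz condition gives $\|x_i\|_1 \le 1$ while the distortion hypothesis gives $\|x_i - x_j\|_1 \ge 2(1-\eps)$; combined, these force $\|x_i\|_1 \ge 1 - 2\eps$ for every $i$. Splitting each coordinate of $x_i$ into its positive and negative parts produces nonnegative vectors $\tilde x_i \in \mathbb R_{\ge 0}^{2d}$ preserving all pairwise $\ell_1$-distances, so at the cost of doubling $d$ I may assume $x_i \in \mathbb R_{\ge 0}^d$. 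For nonnegative reals one has $|a-b| = a + b - 2\min(a,b)$, which gives the key overlap bound
$$
\sum_{k=1}^d \min\bigl(x_i^{(k)}, x_j^{(k)}\bigr) \;=\; \tfrac12\bigl(\|x_i\|_1 + \|x_j\|_1 - \|x_i - x_j\|_1\bigr) \;\le\; \eps.
$$
Normalizing $p_i = x_i/\|x_i\|_1$ yields $n$ probability distributions on $[d]$ with pairwise total variation distance at least $1 - O(\eps)$.

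\textbf{Level-set decomposition.} The theorem is now equivalent to the following: any family of $n$ probability distributions on $[d]$ with pairwise TV at least $1 - c\eps$ requires $d \ge c' \log n / (\eps^2 \log(1/\eps))$. I would attack this via dyadic level sets: for each $\tau = 2^{-t}$, set $S_i^\tau = \{k : p_i(k) \ge \tau\}$. The normalization $\|p_i\|_1 = 1$ forces $|S_i^\tau| \le 1/\tau$, and the overlap bound forces $|S_i^\tau \cap S_j^\tau| \le \eps/\tau$ for all $i \ne j$. Thus at every scale the family $\{S_i^\tau\}_{i=1}^n$ behaves like a constant-weight code whose codewords pairwise share at most an $\eps$-fraction of their support.

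\textbf{Main obstacle: amortization across scales.} At a single scale, a Johnson-type combinatorial bound on such codes yields only $d \gtrsim \log n / \eps$, which is strictly weaker than the target $\log n / (\eps^2 \log(1/\eps))$ for small $\eps$. To close the gap I would aggregate the information across the $\Theta(\log(1/\eps))$ relevant dyadic scales $\tau \in [\eps, 1]$, exploiting the nested inclusions $S_i^{\tau'} \subseteq S_i^{\tau}$ whenever $\tau \le \tau'$ together with the normalization $\sum_\tau \tau \bigl(|S_i^\tau| - |S_i^{2\tau}|\bigr) \approx 1$, which tightly couples the scales to one another. Executing this multi-scale amortization sharply enough to extract an additional factor of $\eps$ from the inter-scale coupling while paying only the unavoidable $\log(1/\eps)$ for the number of scales is the crux. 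As a backup strategy I would keep in mind an information-theoretic formulation---drawing $Y \sim p_X$ for $X$ uniform on $[n]$ and bounding $I(X;Y_1,\dots,Y_m)$ through a Scheff\'e-style tournament over pairs---but the level-set route looks most closely tailored to the specific $\eps^2 \log(1/\eps)$ exponent.
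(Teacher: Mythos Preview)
Your reduction to probability distributions on $[d]$ (or $[2d]$) with pairwise overlap $\sum_k \min(p_i(k),p_j(k)) \le \eps$ is correct and matches the paper exactly. The divergence---and the gap---is in what comes next. Your level-set decomposition is not what the paper does, and you yourself flag that the single-scale bound is off by a factor of $\eps$; the proposed fix, ``amortize across the $\Theta(\log(1/\eps))$ scales using the nested inclusions,'' is stated as the crux but not carried out. I do not see a mechanism by which the nesting $S_i^{2\tau}\subseteq S_i^\tau$ buys you an extra factor of $\eps$: different $p_i$ can concentrate their mass at different scales, so no single scale sees all $n$ distributions with nontrivial support, and summing the per-scale coding bounds only recovers the $\log(1/\eps)$ you already knew you would lose. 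As written this is a hope, not an argument.

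The paper takes a different route that avoids scales entirely: a two-stage \emph{sparsification} of the measures. Stage one is coordinate-wise. Averaging the overlap bound gives $\sum_{i\ne j}\sum_k \min(p_i(k),p_j(k))\le \eps n(n-1)$; by Markov, on a set $A$ of coordinates carrying half the total mass one has, for each $k\in A$, that $\sum_{i\ne j}\min(p_i(k),p_j(k))\le 2\eps(n-1)\sum_i p_i(k)$. A short combinatorial lemma then says that for any nonnegative sequence $(a_i)$ with $\sum_{i\ne j}\min(a_i,a_j)\le \delta(n-1)\sum_i a_i$, the top $\lceil\delta(n-1)\rceil$ entries already carry half of $\sum_i a_i$. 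Restricting each $p_i$ to the coordinates where it lies in this ``top $2\eps n$'' set yields sub-probability measures with average support $O(\eps d)$ and total mass $\ge 1/4$ on average. Stage two then keeps, for each such measure, only its $O(\eps)$-fraction of largest coordinates: this shrinks supports to $O(\eps^2 d)$ while retaining mass $\ge 2\eps$, which after renormalization still forces pairwise total variation $\ge 1/2$. Now the count is immediate: at most $\binom{O(d)}{O(\eps^2 d)}\le \exp(O(\eps^2 d\log(1/\eps)))$ supports, and at most $3^{O(\eps^2 d)}$ well-separated probability measures on any fixed support, so $n\le \exp(O(\eps^2 d\log(1/\eps)))$. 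The missing idea in your sketch is precisely this coordinate-wise ``top-heavy'' lemma that produces $O(\eps d)$-sparse proxies in one shot; once you have it, the second $\eps$ comes from a trivial truncation rather than from any inter-scale coupling.
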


One can achieve such a mapping with $d \leq O\left(\frac{\log n}{\e^2}\right)$, thus the
theorem is tight up to the factor of $c/\log(1/\eps)$.  In general, de Mesmay and the authors \cite{JMMfull}
proved that every $n$-point tree metric admits a distortion $1+\varepsilon$ embedding into $\ell_1^{C(\e) \log n}$
where $C(\e) \leq O((\frac{1}{\eps})^4 \log \frac{1}{\eps})$.  For the special case
of complete trees where all internal nodes have the same degree (such as the $n$-star), they achieve $C(\eps) \leq O(\frac{1}{\eps^2})$.

We recall that given two metric spaces $(X,d_X)$ and $(Y,d_Y)$ and a map $f : X \to Y$, one defines the {\em Lipschitz constant of $f$} by
by $$\|f\|_{\Lip} = \sup_{x \neq y \in X} \frac{d_Y(f(x),f(y))}{d_X(x,y)}\,.$$
The {\em bi-Lipschitz distortion of $f$} is the quantity $\dist(f) = \|f\|_{\Lip} \cdot \|f^{-1}\|_{\Lip}$,
which is taken as infinite when $f$ is not one-to-one.  If there exists such a map
$f$ with distortion $D$, we say that {\em $X$ $D$-embeds into $Y$.}

A {\em finite tree metric} is a finite, graph-theoretic
tree $T=(V,E)$, where every edge is equipped with a positive length.  The metric $d_T$ on $V$ is
given by taking shortest paths.  Since every finite tree metric embeds isometrically into $\ell_1$,
one can view the preceding statements as quantitative bounds on the dimension
required to achieve such an embedding with small distortion (instead of isometrically).

\medskip

Such questions have a rich history.  Perhaps most famously, if $X$ is an $n$-point subset of $\ell_2$,
then a result of Johnson and Lindenstrauss \cite{JL84} states that $X$ admits
a $(1+\e)$-embedding into $\ell_2^d$ where $d = O\left(\frac{\log n}{\e^2}\right)$.
Alon \cite{Alon03} proved that this is tight up to a $\log(1/\e)$ factor:  If $X \subseteq \ell_2^n$ is an orthonormal basis,
then any
$D$-embedding of $X$ into $\ell_1^d$ requires $d \geq \frac{\Omega(\log n)}{\e^2 \log(1/\e)}$.

The situation for finite subsets of $\ell_1$ is quite a bit more delicate.
Talagrand \cite{Tal90}, following earlier results of Bourgain-Lindenstrauss-Milman \cite{BLM89}
and Schechtman \cite{Sch87}, showed that every $n$-dimensional subspace $X \subseteq \ell_1$
(and, in particular, every $n$-point subset)
admits a $(1+\e)$-embedding into $\ell_1^d$, with $d \leq O(\frac{n \log n}{\e^2})$.
For $n$-point subsets, this was improved to $d \leq O(n/\e^2)$ by Newman and Rabinovich \cite{NR10},
using the spectral sparsification techniques of Batson, Spielman, and Srivastava \cite{BSS09}.

In contrast, Brinkman and Charikar \cite{BC05} proved that there exist
$n$-point subsets $X \subseteq \ell_1$ such that any $D$-embedding of $X$ into $\ell_1^d$
requires $d \geq n^{\Omega(1/D^2)}$ (see also \cite{LN-diamond} for a simpler argument).
Thus the exponential
dimension reduction achievable in the $\ell_2$ case cannot be matched
for the $\ell_1$ norm.
More recently, it has been show by Andoni, Charikar, Neiman, and Nguyen \cite{ACNN11} that
there exist $n$-point subsets such that any $(1+\e)$-embedding requires dimension
at least $n^{1-O(1/\log(\e^{-1}))}$.
Regev \cite{Regev11} has given an elegant proof of both
these lower bounds based on information theoretic arguments.
Our proof takes some inspiration from Regev's approach.

\medskip

We note that Theorem \ref{thm:main} has an analog in coding theory.
Let $U_n = \{e_1, e_2, \ldots, e_n\} \subseteq \ell_1$.
Then any $(1+\e)$-embedding of $U_n$ into the Hamming cube $\{0,1\}^d$
requires $d \geq \frac{\Omega(\log n)}{\e^2 \log(1/\e)}$.  This
was proved in 1977 by McEliece, Rodemich, Rumsey, and Welch \cite{MRRW77}
using the Delsarte linear programming bound \cite{Delsarte73}.
The corresponding coding question concerns the maximum number
of points $x_1, x_2, \ldots \in \{0,1\}^d$ which satisfy
$(1-\e) d/2 \leq \|x_i-x_j\|_1 \leq (1+\e) d/2$ for $i \neq j$.
Alon's result for $\ell_2$ \cite{Alon03} yields
this bound as a special case since $\|x-y\|_2 = \sqrt{\|x-y\|_1}$
when $x,y \in \{0,1\}^d$.

On the one hand, the lower bound of Theorem \ref{thm:main} is stronger
since it applies to the target space $\ell_1^d$ and not simply $\{0,1\}^d$.
On the other hand, it is somewhat weaker since embedding $U_n$
corresponds to embedding only the leaves of the star graph $G_n$,
while our lower bound requires an embedding of the internal vertex
as well.  In fact, this is used in a fundamental and crucial way in our proof.
Still, in Section \ref{sec:kahane},
we prove the following somewhat weaker lower bound using simply the set $U_n$.

\begin{theorem}\label{thm:kahane}
There is a constant $c > 0$ such that
for every $\e \in (0,1)$, for all $n$ sufficiently large, any $(1+\e)$-embedding
of $U_n \subseteq \ell_1^n$ into $\ell_1^d$ requires
$$
d \geq \frac{c \log n}{\e \log \frac{1}{\e}}\,.
$$
\end{theorem}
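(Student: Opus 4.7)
My plan is to reduce the question to a near-orthogonality problem in Hilbert space via the fact that $\ell_1$ is of negative type, and then to derive a dimension lower bound via an Alon-style Hadamard rank argument. After rescaling, assume $v_i := f(e_i)$ satisfies $\|v_i - v_j\|_1 \in [1, 1+\e]$ for all $i \ne j$. The canonical cut embedding $\phi \colon \ell_1^d \to L_2$ defined by $\phi(x)(k, t) = \mathbf{1}[x_k > t] - \mathbf{1}[0 > t]$ is isometric for the negative-type distance: $\|\phi(x) - \phi(y)\|_2^2 = \|x - y\|_1$. Set $u_i = \phi(v_i) - n^{-1}\sum_j \phi(v_j)$; then $\sum_i u_i = 0$, $\|u_i - u_j\|^2 = \|v_i - v_j\|_1 \in [1, 1+\e]$, and from the identity $\|u_i\|^2 = n^{-1}\sum_j \|v_i - v_j\|_1 - (2n^2)^{-1}\sum_{j,j'}\|v_j - v_{j'}\|_1$, each $\|u_i\|^2$ lies in $[\tfrac12 - O(\e), \tfrac12 + O(\e)]$. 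Normalizing $\tilde u_i := u_i/\|u_i\|$ produces unit vectors in $L_2$ with pairwise inner products of magnitude $O(\e)$.

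Next I apply the Hadamard rank argument. The Gram matrix $G = (\langle \tilde u_i, \tilde u_j\rangle)$ is PSD with unit diagonal and $|G_{ij}| \le C\e$ for $i \ne j$. Its $k$-th Hadamard power $G^{\odot k}$ is again PSD, has unit diagonal, and off-diagonal entries of magnitude at most $(C\e)^k$. Choosing $k \asymp \log n/\log(1/\e)$ makes $G^{\odot k}$ strictly diagonally dominant and hence of full rank $n$. Combined with the standard Schur rank bound $\mathrm{rank}(G^{\odot k}) \le \binom{D + k - 1}{k}$, where $D = \dim \mathrm{span}\{\tilde u_i\}$, this forces $n \le \binom{D+k-1}{k}$, which after optimization of the binomial coefficient yields $D \ge c\log n/(\e \log(1/\e))$.

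The main obstacle I expect is connecting this Hilbert-space dimension $D$ back to the target dimension $d$. The naive bound from the cut structure of $\phi$ is only $D \le d(n-1)$ --- for each of the $d$ coordinates, the image spans up to $n-1$ step functions --- which is useless. The way I would try to close the gap is to replace $\phi$ with a finite-dimensional surrogate, discretizing each coordinate at resolution $\Theta(\e)$, whose image has dimension $O(d)$ (up to benign factors) while still preserving all pairwise $L_2$-distances up to additive $O(\e)$, absorbed by the existing tolerance in $G$. Making this discretization quantitative is the technical heart of the proof, and this is where I expect the Kahane--Khintchine moment inequalities for Rademacher sums in $\ell_1^d$ to play an essential role, providing the sharp coordinate-wise control needed to reduce the effective dimension to $O(d)$ without degrading the near-orthogonality structure of the $\tilde u_i$. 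Once the surrogate dimension is tied to $d$, the rank bound from the Hadamard argument delivers $d \ge c\log n/(\e\log(1/\e))$ as claimed.
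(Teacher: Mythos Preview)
Your high-level plan --- pass from $\ell_1$ to Hilbert space via negative type, then invoke Alon's Hadamard-power rank argument on a near-orthogonal system --- is exactly the paper's strategy, and you have correctly isolated the only real difficulty: the canonical cut embedding lands in an ambient space of dimension $\Theta(dn)$, so the rank bound says nothing about $d$ until you replace it by a finite-dimensional surrogate whose dimension is controlled by $d$ alone.

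The gap is in your proposed surrogate. Discretizing each coordinate at resolution $\Theta(\e)$ does not work: rounding $(v_i)_k$ to a grid of mesh $\delta$ incurs an \emph{additive} error of order $\delta$ in the contribution $|(v_i)_k-(v_j)_k|$ of that coordinate, and summing over $d$ coordinates gives a total additive error $\Theta(d\delta)$ in $\|v_i-v_j\|_1$. To keep this within $O(\e)$ you would need $\delta=O(\e/d)$, which forces the surrogate dimension up to $\Theta(d^2/\e)$ and destroys the bound. Moreover, the Kahane--Khintchine inequalities (moment comparison for Rademacher sums) are not the relevant tool here and do not produce the coordinate-wise control you are hoping for.

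What the paper uses instead is a different result of Kahane (1981): for each $\e\in(0,1)$ there is a map $K_\e:\mathbb R\to\ell_2^{m}$ with $m=O(1/\e)$ satisfying $(1-\e)\sqrt{|x-y|}\le\|K_\e(x)-K_\e(y)\|_2\le\sqrt{|x-y|}$. The point is that this error is \emph{multiplicative}, so applying $K_\e$ coordinate-wise and summing the squares gives $\|g(v_i)-g(v_j)\|_2^2\in[(1-\e)^2,1]\cdot\|v_i-v_j\|_1$ with no dependence on $d$ in the error term. This yields a $(1+O(\e))$-embedding of $U_n$ into $\ell_2^{O(d/\e)}$, and Alon's bound then gives $d/\e\ge \Omega\bigl(\tfrac{\log n}{\e^2\log(1/\e)}\bigr)$, i.e.\ $d\ge \Omega\bigl(\tfrac{\log n}{\e\log(1/\e)}\bigr)$. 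Note in particular that the surrogate dimension is $O(d/\e)$, not $O(d)$; this factor of $1/\e$ is precisely why the final bound has $\e$ rather than $\e^2$ in the denominator, so it is not a ``benign'' loss.
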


For the case of isometric embeddings (i.e., $\e=0$), Alon and Pudl\'ak \cite{AP03}
showed that if $U_n$ embeds isometrically in $\ell_1^d$, then $d \geq \Omega(n/(\log n))$.
Our proof of Theorem \ref{thm:kahane} bears some similarity to their approach.

\medskip

Finally, we mention that if $B_h$ denotes the height-$h$ complete binary tree
(which has $2^{h+1}-1$ nodes), then it was proved by
Charikar and Sahai \cite{CS02} that for every $h \geq 1$ and $\e > 0$, $B_h$ admits a $(1+\e)$-embedding
into $\ell_1^d$ with $d \leq O(h^2/\e^2)$.
It was asked in \cite{MatOpen} whether one could achieve $d \leq O(h/\e^2)$
and this was resolved positively in \cite{JMMfull}.
From Theorem \ref{thm:main}, one can deduce that this upper bound
is asymptotically tight up to the familiar factor
of $\log(1/\e)$.
This corollary is proved in Section \ref{sec:kary}.

\begin{corollary}\label{cor:kary}
For any $\e > 0$ and $k \geq 2$, the following holds.
For $h$ sufficiently large, any $(1+\e)$-embedding of the complete $k$-ary, height-$h$
tree into $\ell_1^d$ requires
$$
d \geq \frac{\Omega(h \log k)}{\e^2 \log(1/\e)}\,.
$$
\end{corollary}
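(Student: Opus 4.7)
The plan is to reduce Corollary~\ref{cor:kary} to Theorem~\ref{thm:main} by exhibiting a family of $h$ star substructures inside $B_{h,k}$ at geometrically distinct scales, and then arguing that the dimensions required to embed these stars under a $(1+\eps)$-embedding $f$ are essentially disjoint.

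Concretely, let $r$ denote the root of $B_{h,k}$. For each scale $s \in \{1,\ldots,h\}$, I would choose one descendant $u_j^{(s)}$ at depth $s$ in each of $r$'s $k$ top subtrees $(j = 1,\ldots,k)$; then $\{r, u_1^{(s)},\ldots,u_k^{(s)}\}$ is an isometric copy of the $(k+1)$-point star with edge length $s$. Applied to the restriction of $f$ to this subset and rescaled by $1/s$, Theorem~\ref{thm:main} gives that preserving the scale-$s$ star requires $d \ge c\log k/(\eps^2 \log(1/\eps))$, provided $k+1 \ge 1/\eps^2$. When $k$ is so small that the main theorem does not directly apply with $n = k+1$, I would instead group $O(\log(1/\eps)/\log k)$ adjacent depth levels together, obtaining effective stars with $\ge 1/\eps^2$ leaves at each coarsened scale; this reduces the number of distinct scales by a logarithmic factor that is absorbed into the $\log(1/\eps)$ denominator of the final bound.

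The decisive step is to argue that these per-scale dimension requirements combine additively across the $h$ scales rather than overlapping. Each coordinate $f_j$ of $f$ is $1$-Lipschitz on $B_{h,k}$, so its contribution to any pair inside the scale-$s$ star is bounded by $s$; preserving all pairwise leaf distances $\approx 2s$ forces the values $f_j(u_1^{(s)}),\ldots,f_j(u_k^{(s)})$ to be well-spread, which exhausts that coordinate's variation budget at scale $s$. I would then set up a charging argument that assigns each coordinate to the scale at which it contributes the most, and invoke Theorem~\ref{thm:main} per scale to argue that each of the $h$ scales ``owns'' $\Omega(\log k/(\eps^2 \log(1/\eps)))$ essentially disjoint coordinates, for a total of $\Omega(h \log k/(\eps^2 \log(1/\eps)))$.

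The hardest part is formalizing this scale-disjointness in $\ell_1^d$, where a single Lipschitz coordinate can in principle contribute non-trivial variation at multiple scales. I expect to handle it via a potential/energy argument, likely built on the same information-theoretic machinery used in the proof of Theorem~\ref{thm:main} (the Regev-inspired approach noted in the introduction): each coordinate's total variation is decomposed across scale bands, and the per-scale lower bound is invoked on each band to produce the additive aggregate.
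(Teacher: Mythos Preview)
Your proposal has a genuine gap at exactly the point you flag as ``the hardest part'': the scale-disjointness step. Theorem~\ref{thm:main} is a black-box lower bound on $d$; it says nothing about \emph{which} coordinates carry the embedding of a given star. A single Lipschitz coordinate $f_j$ can and typically will contribute nontrivially at every scale simultaneously (for instance, take $f_j$ to be the tree distance to a fixed vertex), so the heuristic ``each scale exhausts its coordinates' variation budget'' is not sound in $\ell_1$. Your suggested charging scheme---assign each coordinate to the scale where it contributes most---would at best show that \emph{some} scale owns $\Omega(\log k/(\eps^2\log(1/\eps)))$ coordinates, not that all $h$ scales do; summing requires an a priori reason that no coordinate can do double duty, and you have not supplied one. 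The vague appeal to an ``information-theoretic potential argument'' does not close this gap: the Regev-style machinery behind Theorem~\ref{thm:main} controls total-variation overlap among measures on $[k]$, not how a single real-valued Lipschitz function apportions its variation across tree levels.

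The paper sidesteps this difficulty entirely by reducing to a \emph{single} application of Theorem~\ref{thm:main} on one very large star, rather than $h$ small ones. The trick (Lemma~\ref{lem:reduction:2}) is: let $S$ be the $k^{\lceil h/2\rceil}$ vertices at depth $\lceil h/2\rceil$, pick for each $v\in S$ a descendant leaf $x_v$, and map $v$ to the normalized difference vector $\bigl(f(x_v)-f(v)\bigr)/(h-\lceil h/2\rceil)$. A short triangle-inequality computation shows these vectors, together with $0$, form a $(1+4\eps)$-embedding of the $(1+k^{\lceil h/2\rceil})$-star into $\ell_1^d$. Now Theorem~\ref{thm:main} applied once with $n=1+k^{\lceil h/2\rceil}$ gives $\log n \asymp h\log k$ directly, with no need to argue that scales occupy disjoint dimensions.
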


\section{proof of Theorem~\ref{thm:main}}
%Before we start to prove our main theorem, we need the following definition.

%\medskip
%\noindent
%{\bf$\alpha$-spread sets.} For $\alpha\in[0,2]$, we say that a set $S\subset \R^d$ is $\alpha$-spread if it satisfies the following three properties:
%\begin{enumerate}
%\item for all $x\in S,$ we have $\|x\|_1=1$;
%\item for all distinct elements $x,y\in S$, $\|x-y\|_1\geq \alpha$.
%\end{enumerate}

We will first  bound the number of ``almost disjoint'' probability  measures  that can be put on a finite set. Then we will translate this to  a lower bound for the dimension required for embedding the $n$-star into $\ell_1^d$ with distortion $1+\eps$.

Let  $X$ be a finite ground set, and let $\mathcal S$ be a set of measures $X$. We say that $\mathcal{S}$ is \emph{$\eps$-unrelated} if, for all distinct elements $\mu, \nu\in \mathcal S$, $$\|\mu-\nu\|_{TV} \geq \frac 1 2(\mu(X)+\nu(X))-\eps,$$ where $\|\cdot\|_{TV}$ denotes the total variation distance.
The following lemma is an easy corollary of a fact from \cite{Regev11}. We include the proof here for completeness.

\begin{lemma}\label{lem:reduction}
For every $\eps\in(0,1)$ and $k\in \mathbb N$, if there exists a map $f : (V_n,\rho_n) \to \ell_1^k$ with distortion $1+\eps$, then there exists an $\eps$-unrelated set of probability measures on $\{1,\ldots ,2k+1\}$ of size $n-1$.
\end{lemma}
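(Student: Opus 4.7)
The plan is to turn an $\ell_1^k$ embedding of the star into a family of $n-1$ probability measures on $\{1,\dots,2k+1\}$ by splitting each coordinate of $f$ into its positive and negative parts and absorbing the ``missing'' mass in one extra coordinate. The correspondence will convert pairwise leaf distances in $\ell_1^k$ directly into total variation distances between measures.

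First I would normalize. Rescaling $f$ so that $\|f\|_{\Lip}=1$ converts the distortion hypothesis into
\[
(1-\eps)\rho_n(x,y) \;\leq\; \|f(x)-f(y)\|_1 \;\leq\; \rho_n(x,y) \qquad \text{for all } x,y \in V_n,
\]
using the elementary inequality $1/(1+\eps) \geq 1-\eps$. After translating, I also assume $f(c)=0$, where $c$ is the center of the star. Then for each leaf $v$, $\|f(v)\|_1 \leq 1$, and for any two distinct leaves $v,w$, $\|f(v)-f(w)\|_1 \geq 2(1-\eps)$.

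Next, for each leaf $v$ I would define $\mu_v$ on $\{1,\dots,2k+1\}$ by placing mass $\max(f_j(v),0)$ at the coordinate $2j-1$ and mass $\max(-f_j(v),0)$ at the coordinate $2j$ for $j=1,\dots,k$, and the slack $1-\|f(v)\|_1 \geq 0$ at the auxiliary point $2k+1$. Since $\sum_{j=1}^k|f_j(v)|=\|f(v)\|_1$, this is indeed a probability measure.

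The heart of the calculation is the elementary identity $|a^+-b^+|+|a^--b^-|=|a-b|$ for all real $a,b$, which follows by a three-case check on the signs of $a$ and $b$. Summing coordinatewise,
\[
2\|\mu_v-\mu_w\|_{TV} \;=\; \sum_{j=1}^k |f_j(v)-f_j(w)| + \bigl|\|f(v)\|_1-\|f(w)\|_1\bigr| \;\geq\; \|f(v)-f(w)\|_1 \;\geq\; 2(1-\eps),
\]
so $\|\mu_v-\mu_w\|_{TV} \geq 1-\eps$, which is exactly the $\eps$-unrelatedness condition for probability measures, and the leaves supply the required $n-1$ measures. I do not anticipate a real obstacle; the only care needed is the scaling and the verification that the slack mass placed on the $(2k+1)$-st coordinate is nonnegative, both of which are immediate from the $1$-Lipschitz normalization of $f$ and the choice to place the origin at the center of the star.
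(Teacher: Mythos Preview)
Your proposal is correct and follows essentially the same approach as the paper: translate the center to the origin, split each coordinate into positive and negative parts on $2k$ points, and use the $(2k+1)$st point to absorb the slack $1-\|f(v)\|_1$. Your use of the identity $|a^+-b^+|+|a^--b^-|=|a-b|$ makes the key inequality $\|\mu_v-\mu_w\|_{TV}\ge \tfrac12\|f(v)-f(w)\|_1$ explicit and in fact cleaner than the paper's displayed computation, which contains a typo (the final ``$\ge \|f(u)-f(v)\|_1$'' there should carry a factor of $\tfrac12$).
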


\begin{proof}
Let $r\in V_n$ denote the the vertex of degree $n-1$. By translation and scaling, we may assume that $f(r)=0$ and $f$ is $1$-Lipschiz.
Thus for all vertices $v\in V_n$, we have $\|f(v)\|_1\leq 1$. For each vertex $v\in V_n\setminus \{r\}$ define the measure $\mu_v$ as follows

$$
\mu_v(\{i\})=
\begin{cases}
\max(0,f(v)_i) & 1\leq i\leq k\\
\max(0,-f(v)_i) & k+1\leq i\leq 2k\\
1- \|f(v)\|_1& i=2k+1,
\end{cases}
$$
where we use $f(v)_i$ to denote the $i$th coordinate of $f(v)$.
%The set $S$ has size $n-1$.

Note that for all $u,v\in V_n\setminus \{r\}$ we have
\begin{align*}
\|\mu_u-\mu_v\|_{TV} &= \frac 12\left(\vphantom{\bigoplus}\|f(u)-f(v)\|_1+\left|\vphantom{\bigoplus}(1-\|f(u)\|_1)-(1-\|f(u)\|_1)\right|\right) \\
&\geq \|f(u)-f(v)\|_1\,.
\end{align*}
Since $f$ has distortion $1+\eps$, for any two distinct vertices $u,v\in V_n$, we have $$\|f(u)-f(v)\|_1\geq \left(2\over 1+\eps\right)\geq 2(1-\eps).$$
Therefore the collection $\{ \mu_v : v \in V_n \setminus \{r\} \}$ satisfies
the conditions of the lemma.
\end{proof}

%One can similarly define maximum of the two measures.

%The next lemma shows that we can take an $\alpha$-spread set, and reduces the support size of the elements in $S$, while guaranteeing that the pairwise distance between points is still large.

The next lemma is the final ingredient that we need to prove Theorem~\ref{thm:main}. Let ${\mathcal M}_k$ be the set of all measures $\{1,2,\ldots, k\}$, and let $\mathcal P_k$ be the set of all probability measures on $\{1,2,\ldots, k\}$.

\begin{lemma}\label{lem:one}
There exists a universal constant $C \geq 1$ such that for $\eps\leq 1/16$, the following holds. If there is an $\eps$-unrelated set $\mathcal S\subseteq \mathcal P_k$, then there exists a $\frac 1 2$-unrelated set $\mathcal T\subseteq \mathcal P_k$ of size at least ${|\mathcal S|\over 14}$ such that for all $\mu\in \mathcal T$, we have $|\supp(\mu)|\leq \lceil C\eps(\eps+\frac 1 n) d)\rceil$.
\end{lemma}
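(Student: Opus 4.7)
The plan is to construct $\mathcal{T}$ by truncating each $\mu\in\mathcal{S}$ to its ``significant'' coordinates, renormalizing, and then discarding those measures for which the truncation would lose too much mass. Concretely, let $s = \lceil C\eps(\eps+1/n)k\rceil$ and $\tau=1/s$. For each $\mu\in\mathcal{S}$ define $A_\mu = \{i : \mu_i\geq\tau\}$, so that $|A_\mu|\leq 1/\tau = s$, and set $\hat\mu = \mu|_{A_\mu}/\mu(A_\mu)\in\mathcal{P}_k$. Call $\mu$ \emph{good} if $\mu(A_\mu)\geq 1/2$, and put $\mathcal{T} = \{\hat\mu : \mu\ \text{good}\}$; each element then has support of size at most~$s$.

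First I would verify that $\mathcal{T}$ is $\tfrac{1}{2}$-unrelated. For distinct good $\mu,\nu$ the pointwise estimate $\min(\hat\mu_i,\hat\nu_i)\leq\min(\mu_i,\nu_i)/\min(\mu(A_\mu),\nu(A_\nu))$ combined with $\eps$-unrelatedness of $\mathcal{S}$ yields $\sum_i\min(\hat\mu_i,\hat\nu_i)\leq 2\eps\leq 1/8$, where the last inequality uses $\eps\leq 1/16$.

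The main work is to show the bad set $\mathcal{B} = \mathcal{S}\setminus\{\text{good}\}$ has size at most $(13/14)|\mathcal{S}|$. For $\mu\in\mathcal{B}$, write $\mu' = \mu|_{T_\mu}$ with $T_\mu = \{1,\ldots,k\}\setminus A_\mu$, so $\|\mu'\|_1 > 1/2$ and $\|\mu'\|_\infty<\tau$. The crucial pointwise estimate is $\min(a,b)\geq ab/\tau$ for $a,b\in[0,\tau)$, which converts $\eps$-unrelatedness into the inner-product bound $\langle\mu',\nu'\rangle\leq\tau\eps$ for distinct $\mu,\nu\in\mathcal{B}$. Combined with the diagonal bound $\|\mu'\|_2^2\leq\tau$, the vector $w = \sum_{\mu\in\mathcal{B}}\mu'$ satisfies $\|w\|_2^2\leq\tau|\mathcal{B}|+\eps\tau|\mathcal{B}|^2$ while $\|w\|_1>|\mathcal{B}|/2$. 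Cauchy--Schwarz $\|w\|_1^2\leq k\|w\|_2^2$ then yields $|\mathcal{B}|^2/4\leq k\tau|\mathcal{B}|(1+\eps|\mathcal{B}|)$, i.e.\ $|\mathcal{B}|(1-4k\tau\eps)\leq 4k\tau$.

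The hard part I anticipate is matching the precise form of the support bound in the statement. My calculation above requires $4k\tau\eps\leq c<1$, forcing $s\gtrsim\eps k$, and $4k\tau/(1-4k\tau\eps)\leq (13/14)|\mathcal{S}|$, forcing $s\gtrsim k/|\mathcal{S}|$; together these give a support bound of order $(\eps+1/n)k$ and produce the two terms in $\eps+1/n$. Extracting the additional factor of $\eps$ promised by the statement---so that the bound is $\eps(\eps+1/n)k$ rather than $(\eps+1/n)k$---will likely require a more delicate argument, for instance a dyadic decomposition of $T_\mu$ by mass level that sharpens the bound on $\langle\mu',\nu'\rangle$ at each scale, or an iterated truncation that first reduces to a $\tfrac{1}{2}$-unrelated family with support $O(\eps k)$ and then refines it further. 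Once $|\mathcal{B}|\leq(13/14)|\mathcal{S}|$ is established with $s$ as in the statement, $|\mathcal{T}|=|\mathcal{S}|-|\mathcal{B}|\geq|\mathcal{S}|/14$ and the lemma is complete.
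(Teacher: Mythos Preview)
Your first-stage argument is genuinely different from the paper's and, where it works, cleaner. The paper reaches a support bound of order $(\eps+1/n)k$ by a coordinate-wise analysis: it identifies coordinates $i$ where the ratio $\Delta_{\mathcal S}(\{i\})/\sum_{\mu}\mu(\{i\})$ is small, applies a sorting lemma to show that at each such coordinate a set $W_i$ of at most $\lceil 2\eps(n-1)\rceil$ measures captures half the mass, and then restricts each $\mu$ to the coordinates where $\mu\in W_i$. Your threshold-plus-Cauchy--Schwarz route reaches the same intermediate conclusion with less machinery, and the pointwise inequality $\min(a,b)\geq ab/\tau$ for $a,b<\tau$ is a nice device. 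So far, so good.

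The gap you identify is real, and you have correctly diagnosed that your single truncation can only deliver $s\asymp(\eps+1/n)k$, not $s\asymp\eps(\eps+1/n)k$. But the fix is neither a dyadic decomposition nor anything delicate; it is precisely the ``iterated truncation'' you mention, carried out once more in the most naive way. After your first stage you have, for each good $\mu$, a restriction with mass at least $1/2$ supported on at most $s_1=O((\eps+1/n)k)$ points, and these restrictions are still $\eps$-unrelated (restriction only decreases $\min$). Now take, for each such measure, the $\lceil 4\eps s_1\rceil$ coordinates of largest mass: since the average coordinate has mass at least $(1/2)/s_1$, the retained mass is at least $2\eps$. Renormalizing, the pairwise $\min$ is at most $\eps/(2\eps)=1/2$, so the family is $\tfrac12$-unrelated, and the support is now $O(\eps s_1)=O(\eps(\eps+1/n)k)$. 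This is exactly what the paper does in its final step (III$\Rightarrow$IV); the extra factor of $\eps$ comes entirely from this second, trivial pass, not from sharpening the first one.
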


\medskip

Before proving the lemma, we use it to finish the proof of  Theorem~\ref{thm:main}.

\begin{proof}[Proof of Theorem~\ref{thm:main}]
Suppose that there is a map from the $n$-star to $\ell_1^d$ with distortion $1+\eps$.
Then by Lemma~\ref{lem:reduction}, there exists an $\eps$-unrelated set of probability measures on $\{2d+1\}$ of size $n-1$. Thus by Lemma~\ref{lem:one}, there must exist a $\frac 1 2$-unrelated set $\mathcal S$ of probability measures on $\{1,\ldots, 2d+1\}$ of size $\Omega(n)$ such that every measure in $\mathcal S$ has support size at most $$\left\lceil C\cdot{\eps}\cdot\left(\eps+\frac 1 {n-1}\right)\cdot (2d+1)\right\rceil,$$
for some universal constant $C \geq 1$.

We now divide the problem into two cases.
In the case that $C{\eps}(\eps+\frac 1 {|\mathcal S|}) (2d+1)<1$, every measure in $\mathcal S$ is supported on exactly one element, therefore $|\mathcal S| \leq 2d+1$. Hence, %Moreover, in this case since $\eps^{2}n=\Omega(\eps^2|S|)=\Omega(\eps^{2}(2d+1))=\Omega(1)$. Hence $n=O({\log n\over \eps^2\log (1/\eps)}))$, implying
$$d \geq \Omega(|\mathcal S|) \geq \Omega(n) \geq \frac{\Omega(\log n)}{\e^2 \log(1/\e)}\,,$$
where we have used the assumption that $n \geq 1/\e^2$.

\medskip

In the second case, we have $C{\eps}(\eps+\frac 1 {|\mathcal S|}) (2d+1)\geq1$. Since ${1\over |\mathcal S|}=O(\eps)$, each element $\mu \in \mathcal S$
 has $|\supp(\mu)| \leq O(\e^2 d)$.
 Thus for some constant $c > 0$,
there are at most ${2d+1 \choose c \e^2 d} \leq \exp\left(O(\eps^2d \log(1/\eps)d)\right)$ different supports of size $O(\eps^2 d)$ for the measures in $\mathcal S$.

Since $\mathcal S$ is a $\frac12$-unrelated set of probability measures, for any $\mu,\nu \in \mathcal S$, we have
$$
\|\mu-\nu\|_{TV} \geq \frac12\,.
$$
In particular, if we fix a set $Q \subseteq X$, then by a simple $|Q|$-dimensional volume argument,
$$
|\mu \in \mathcal S : \supp(\mu) \subseteq Q| \leq 3^{|Q|}\,.
$$
All together, we have
$$
|\mathcal S| \leq 3^{O(\e^2 d)} \cdot e^{O(\e^2 d \log(1/\e))} \leq e^{O(\e^2 d \log(1/\e))}\,.
$$
Hence,
$
d \geq \Omega\left({\log |\mathcal S|\over \eps^2\log(1/\eps)}\right)$, completing the proof.
\end{proof}

\begin{remark}
We note that there is a straightforward volume lower
bound for large distortions $D \geq 1$: Any $D$-embedding of the $n$-star
into $\ell_1^d$ requires $d \geq \Omega(\frac{\log n}{\log D})$.
This is simply because the maximal number of disjoint $\ell_1$ balls of radius $1/D$
that can be packed in an $\ell_1$ ball of radius $2$ is $(2D)^d$ in $d$ dimensions.
\end{remark}

We are left to prove Lemma~\ref{lem:one}.
We start by recalling some simple properties of the total variation distance.
For a finite set $S$ and measures $\mu,\nu:2^S\to \mathbb  [0,\infty)$, we define
$$
\min(\mu,\nu) (T)=\sum_{x\in T} \min\left\{\vphantom{\bigoplus}\mu(\{x\}),\nu(\{x\})\right\}.
$$
For $k\in \mathbb N$, and measures  $\mu, \nu\in {\mathcal M}_k$, we have
\begin{equation}\label{eq:min}
\|\mu-\nu\|_{TV}= \frac 1 2(\mu([k])+\nu([k]))-\min(\mu,\nu)([k])\,,
\end{equation}
where we use the notation $[k] = \{1,2,\ldots,k\}$.
We also use the following partial order on measures on the set $S$: $\mu \preceq \nu$, if and only if  for all $T\subseteq S$,  $\mu(T)\leq \nu(T)$.
The following observation is immediate from \eqref{eq:min}.

\begin{observation}\label{obs:one}
Fix $k\in \mathbb N$, $\e > 0$, and measures $\mu,\nu,  \mu', \nu'\in {\mathcal M}_k$, such that $  \mu'\preceq \mu$ and $  \nu'\preceq \nu$.  If
$$\|\mu-\nu\|_{TV}\geq \frac 1 2(\mu([k])+\nu([k]))-\eps,$$ then
\begin{align*}
\|\mu'-\nu'\|_{TV}\geq \frac 1 2(\mu'([k])+ \nu'([k]))-\eps.
\end{align*}
\end{observation}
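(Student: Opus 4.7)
The plan is to apply the identity \eqref{eq:min} to both the hypothesis and the desired conclusion, reducing the inequality on total variation distance to a pointwise comparison of the measure $\min(\mu,\nu)$ with $\min(\mu',\nu')$. More precisely, I will rewrite the hypothesis
\[
\|\mu-\nu\|_{TV}\ge \tfrac12\bigl(\mu([k])+\nu([k])\bigr)-\eps
\]
using \eqref{eq:min} as $\min(\mu,\nu)([k])\le \eps$, and rewrite the desired conclusion in exactly the same form as $\min(\mu',\nu')([k])\le \eps$. So all that needs to be proved is the monotonicity statement: if $\mu'\preceq \mu$ and $\nu'\preceq\nu$, then $\min(\mu',\nu')([k])\le \min(\mu,\nu)([k])$.

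For this monotonicity step, I will first observe that on a finite ground set, the partial order $\mu'\preceq \mu$ is equivalent to the pointwise inequality $\mu'(\{x\})\le \mu(\{x\})$ for every $x\in [k]$, since singletons are valid test sets and every set is a disjoint union of singletons. Applying this elementwise to both pairs of measures gives
\[
\min\bigl(\mu'(\{x\}),\nu'(\{x\})\bigr)\le \min\bigl(\mu(\{x\}),\nu(\{x\})\bigr)
\]
for each $x\in[k]$, and summing over $x$ yields $\min(\mu',\nu')([k])\le \min(\mu,\nu)([k])$, completing the chain of inequalities.

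There is essentially no obstacle here: the entire content of the observation is encoded in identity \eqref{eq:min} together with the pointwise monotonicity of the binary $\min$ operator under coordinatewise domination. The only point that deserves a sentence of care in the write-up is the equivalence between $\preceq$ and pointwise domination on singletons, which holds because the ground set is finite.
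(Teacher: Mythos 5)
Your proposal is correct and is exactly the argument the paper has in mind: the paper simply declares the observation ``immediate from \eqref{eq:min}'', and your write-up is the natural expansion of that, rewriting both inequalities as $\min(\mu,\nu)([k])\le\eps$ and $\min(\mu',\nu')([k])\le\eps$ and using pointwise monotonicity of $\min$ under the domination order. No issues; the singleton characterization of $\preceq$ on a finite ground set is correct and suffices.
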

%%
%\begin{proof}
%%We prove this lemma by two applications of \eqref{eq:min},
%\begin{align*}
%\|x'-y'\|_1&= \|x'\|_1+\|y'\|_1-2\|\min(x',y')\|_1\\
%&\geq  \|x'\|_1+\|y'\|_1-2\|\min(x,y)\|_1\\
%&\overset{\eqref{eq:min}}=\|x'\|_1+\|y'\|_1-(\|x\|_1+\|y\|_1-\|x-y\|_1)\\
%&=\|x'\|_1+\|y'\|_1-\eps.
%\end{align*}
%\end{proof}

We will require the following fact in the proof of Lemma~\ref{lem:one}.

\begin{lemma}\label{lem:sparse}
Consider $\delta\in(0,1)$ and a finite subset $S\subseteq [0,\infty)$ such that
\begin{equation}\label{eq:lem:sparse}
\delta\cdot(|S|-1)\cdot \sum_{x\in S}x \geq \sum_{x,y\in S,x\neq y}\min(x,y).
\end{equation}
Then there exists a set $T\subseteq S$, such that $\sum_{x\in T}x\geq \frac 1 2\sum_{x\in S}x$ and $|T|\leq \ceiling{\delta (|S|-1)}$.
\end{lemma}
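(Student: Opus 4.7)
The plan is to take $T$ to consist of the $k = \lceil \delta(|S|-1)\rceil$ largest elements of $S$ and show that this already captures at least half the total mass. The right-hand side of the hypothesis has a clean form once we sort: writing $n = |S|$ and labeling the elements in decreasing order as $x_1 \ge x_2 \ge \cdots \ge x_n$, the contribution of an ordered pair $(x_i, x_j)$ with $i \ne j$ to the sum is the smaller of the two values, which gives
$$
\sum_{\substack{x,y\in S\\x\ne y}} \min(x,y) \;=\; 2\sum_{1\le i<j\le n} x_j \;=\; 2\sum_{j=2}^n (j-1)\, x_j.
$$

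With $T = \{x_1,\ldots,x_k\}$, set $A = \sum_{j=1}^k x_j$ and $B = \sum_{j=k+1}^n x_j$, so $\sum_{x\in S} x = A+B$. I would establish $A \ge B$, which immediately yields $A \ge \tfrac12(A+B)$ as required. The key estimate is to throw away the terms with $j\le k$ in the sum above and use $j-1\ge k$ for $j>k$:
$$
2\sum_{j=2}^n (j-1)\, x_j \;\ge\; 2\sum_{j=k+1}^n (j-1)\, x_j \;\ge\; 2k\sum_{j=k+1}^n x_j \;=\; 2kB.
$$
Combining this with the hypothesis and the inequality $\delta(n-1) \le k$ (which is just the definition of the ceiling) gives
$$
k(A+B) \;\ge\; \delta(n-1)(A+B) \;\ge\; 2kB,
$$
so $A\ge B$ whenever $k\ge 1$. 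The degenerate case $k=0$ forces $\delta(n-1)<1$; if $n\ge 2$ this never arises because $\delta\in(0,1)$ need not be small, but if $n=1$ the hypothesis is vacuous and one can take $T=\emptyset$ provided the ambient sum is zero, so there is essentially nothing to check.

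There is no real obstacle here; the only thing that requires any thought is spotting the right candidate set $T$ and noticing the coincidence $j-1\ge k$ for $j>k$, which is exactly what makes the chosen value $k=\lceil\delta(n-1)\rceil$ tight for this argument.
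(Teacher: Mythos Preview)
Your proposal is correct and follows essentially the same approach as the paper: sort the elements in decreasing order, take $T$ to be the top $k=\lceil\delta(|S|-1)\rceil$ elements, rewrite $\sum_{x\ne y}\min(x,y)=2\sum_j (j-1)x_j$, and use $j-1\ge k$ for the tail terms to bound $B$ by $\tfrac12(A+B)$. The only cosmetic difference is that the paper cancels $\delta(n-1)$ directly rather than first passing through $k(A+B)$.
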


\begin{proof}
Let $n=|S|$, and let $a_1\geq \cdots \geq a_n\geq 0$ be the elements of $S$ in decreasing order. Then,
$$
\sum_{i=1}^n\sum_{\substack{j=1\\i\neq j}}^n \min(a_i,a_j)=\sum_{i=1}^n\sum_{\substack{j=1\\i\neq j}}^n a_{\max(i,j)}=\sum_{i=1}^n 2(i-1)a_{i}\,.
$$
Letting $k=\ceiling{\delta (|S|-1)}$, we have
$$
\sum_{i=1}^n\sum_{\substack{j=1\\i\neq j}}^n \min(a_i,a_j) \ge \sum_{i=k+1}^n 2(i-1)a_{i} \ge 2k \sum_{i=k+1}^n a_{i}\geq 2\delta(|S|-1)\sum_{i=k+1}^n a_{i}\,.
$$
Combining this inequality and \eqref{eq:lem:sparse} implies that  $\sum_{i=k+1}^n a_{i}\leq \frac 1 2  \sum_{x\in S}x$, therefore $\sum_{i=1}^k a_{i}\geq \frac 1 2  \sum_{x\in S}x.$ Hence the set $T=\{a_1,\ldots, a_k\}$ satisfies both conditions of the lemma.
%Now, let $T=\{a_1,\ldots, a_\ceiling{2\alpha k}\}$. It is easy to check that $\ceiling{2\alpha k}<2\alpha\delta (|S|-1)+2$, and moreover since $\{a_i\}_{i\in [k]}$ is decreasing sequence
%$\energy(T)\geq 2\alpha \sum_{i=1}^k a_i\geq \alpha \energy(S)$, completing the proof.
\end{proof}

%Finally, we move on to the proof of Lemma \ref{lem:one}.

\begin{proof}[Proof of Lemma~\ref{lem:one}]
We will show that each of the following statements implies the next one.

\begin{enumerate}
\item[I)] There exists an $\eps$-unrelated set $\mathcal  S\subseteq \mathcal P_k$ of size $n$.

\medskip

\item[II)] There exists an $\eps$-unrelated set $\mathcal S\subseteq {\mathcal M}_k$ of size $n$ such that
\begin{enumerate}
\item for all $\mu\in \mathcal S$, $\mu([k])\leq 1  $;
\item $\sum_{\mu\in \mathcal S}\mu([k])\geq n/4$;
\item $\sum_{\mu\in \mathcal S}|\supp(\mu)|< (2\eps n+1)k$;
\end{enumerate}

\medskip

\item[III)]There exists an $\eps$-unrelated set $\mathcal  S\subseteq {\mathcal M}_k $ of size at least $n/14$ such that
\begin{enumerate}
\item for all $\mu\in \mathcal S$, $|\supp(\mu)|< 14\left(2\eps +{1\over n}\right)k$;
\item for all $\mu\in \mathcal S,$ we have $\mu([k]) \geq 1/8$;
\end{enumerate}

\medskip

\item[IV)] There exists a set satisfying all the conditions of the lemma.
\end{enumerate}

\medskip
\noindent

For ease of notation, given a subset $\mathcal  S\subseteq {\mathcal M}_k$, we define,
$$
\Delta_{\mathcal  S}=\sum_{\mu,\nu\in \mathcal  S,\mu\neq \nu}\min(\mu,\nu).
$$

Note that, if for some $\eps\in[0,1]$, $\mathcal  S \subseteq \mathcal P_k$ is $\eps$-unrelated, then \eqref{eq:min} implies that
\begin{eqnarray}
\Delta_{\mathcal  S}([k])&\leq& \sum_{\mu,\nu\in \mathcal  S,\mu\neq \nu}\frac 1 2 (\mu([k])+\nu([k]))- \|\mu-\nu\|_{TV} \nonumber \\
&\leq& \sum_{\mu,\nu\in \mathcal  S,\mu\neq \nu}\left(1-(1-\eps)\right)\nonumber \\ &=&\eps |\mathcal  S|\cdot(|\mathcal  S|-1)\,.
\label{eq:delta}
\end{eqnarray}

%Moreover, for a measure $\mu$ on the set $S$ and $T\subseteq S$, we define $$\mu_{\cond T}(X)=\mu(T\cap X).$$

\medskip
\noindent
{\bf I $\Rightarrow$ II:} Suppose that $\mathcal  S_\I\subseteq \mathcal P_k$ is $\eps$-unrelated, and let $X$ be a random variable with state space $\{1,\ldots, k\}$ such that
$$\pr(X=i) = \frac{\sum_{\mu \in \mathcal S_{\I}} \mu(\{i\})}{|\mathcal S_{\I}|}.$$
We have
$$
\E\left[\Delta_{\mathcal S_\I}(X)\over \sum_{\mu\in \mathcal  S_\I}\mu(X)\right]=\frac 1{|\mathcal  S_\I|}\sum_{i=1}^k \Delta_{\mathcal  S_\I}(\{i\})=\frac 1{|\mathcal  S_\I|} \Delta_{\mathcal S_\I}([k])\overset{\eqref{eq:delta}}\leq  \eps(|\mathcal S_\I|-1),
$$
Markov's inequality implies that
\[
\pr\left({\Delta_{\mathcal S_\I}(X)\over \sum_{\mu\in \mathcal  S_I} \mu(X)} \leq 2\eps(|\mathcal  S_\I|-1) \right)\geq \frac 1 2.
\]
So if we let  $$A=\left\{i: %i\in \{1,\ldots, d\} \textrm{ and }
{\Delta_{\mathcal S_\I}(\{i\})\over \sum_{\mu\in \mathcal S_I}\mu(\{i\})} \leq 2\eps(|\mathcal S_\I|-1)\right\}\,,$$
then we have
\begin{equation}\label{eq:sparse:A}
\frac{1}{|\mathcal S_{\I}|} \sum_{\mu\in \mathcal S_\I} \mu(A)=
\sum_{\mu\in \mathcal S_\I}\sum_{i\in A}
\pr(X=i)
\geq \frac 1 2\,.
\end{equation}

By Lemma~\ref{lem:sparse}, for all $i\in A$ there exists a set $W_i\subseteq \mathcal S_\I$ such that $|W_i|\leq \ceiling{2\eps(|\mathcal  S_\I|-1)}$, and
\begin{equation}\label{eq:sparse:Wi}
\sum_{\mu\in W_i}\mu(\{i\})\geq \frac 1 2\sum_{\mu\in \mathcal S_\I}\mu(\{i\}).
\end{equation}

For $\mu\in \mathcal S_\I$, let $Y_\mu=\{i:\mu\in W_i\}$.  For any $Y \subseteq S$, define $R_Y : 2^S \to 2^S$ by $R_Y(T)=T \cap Y$.
on elements of $Y$ and zero elsewhere.

Let $\mathcal S_{\II}=\{\mu\circ R_{Y_\mu}\}_{\mu\in \mathcal S_\I}$.
Since $\mu\circ R_{Y_\mu}\preceq \mu$,  $\mathcal S_{\II}$ satisfies II(a). Furthermore, Observation~\ref{obs:one} implies that the collection $\{\mu\circ R_{Y_\mu}\}_{\mu\in \mathcal S_\I}$ is $\eps$-unrelated.  Furthermore, $\mathcal S_\II$ satisfies II(b) because
%and define $$S'=\{f(x): x\in S\textrm{ and }\|x\|_1\geq 1/4\}.$$
$$\sum_{\mu\in \mathcal S_I}{\mu(Y_\mu)}\overset{\eqref{eq:sparse:Wi}}{\geq} \frac 1 2 \sum_{i\in A}\sum_{\mu\in \mathcal S_I}\mu(\{i\})=\frac 1 2\sum_{\mu\in \mathcal S_I}\mu(A)\overset{\eqref{eq:sparse:A}}\geq \frac14 |\mathcal S_\I|\,.$$
Finally, condition II(c) holds because
$$
 \sum_{\mu\in \mathcal S_\I} |\supp(\mu\circ  R_{Y_\mu})| \leq \sum_{i\in A} |W_i|< 2\eps(|\mathcal S_\I|-1) |A|+|A|\leq (2\eps |\mathcal S_\I|+1) k\,.
$$

\medskip
\noindent
{\bf II $\Rightarrow$ III:}
Suppose that $\mathcal S_{\II} \subseteq \mathcal M_k$ is an $\eps$-unrelated collection of cardinality $n$ satisfying all the conditions of II.
We have $\max\{\mu([k])\}_{\mu\in \mathcal S_{\II}}\leq 1$ and $\sum_{\mu\in \mathcal  S_{\II}} \mu([k])\geq |\mathcal S_{\II}|/4$. Therefore, there exists a subcollection $\mathcal S'\subseteq \mathcal S_{\II}$ such that for all $\mu\in  \mathcal S'$, we have $\mu([k])\geq 1/8$, and
$$
|\mathcal S'|\geq \left({1/4-1/8\over1-1/8}\right)|\mathcal S_{\II}| \geq \frac n 7.
$$

By Markov's inequality, there exists a collection of measures $\mathcal S_{\III}$ such that $|\mathcal S_{\III}|\geq \frac12|\mathcal S'|\geq \frac 1{14}|\mathcal S_{\II}|$, where  for all $\mu\in \mathcal S_{\III}$,
\begin{eqnarray*}
\supp(\mu)\leq
 2{\sum_{\mu\in \mathcal S'}|\supp(\mu)|\over |\mathcal S'|}&\leq&  2{\sum_{\mu\in \mathcal S_{\II}}|\supp(\mu)|\over |\mathcal S'|} \\
 &\leq&  2{\sum_{\mu\in S_{\II}}|\supp(\mu)|\over |S_{\II}|/ 7}\overset{\textrm{II(c)}}\leq
14k\left(2\eps+{1\over n}\right).
\end{eqnarray*}

The set $\mathcal S_{\III}$ has size at least $n\over 14$ and by construction satisfies conditions (a) and (b) of III.
%The last property also follows from Observation~\ref{obs:one}.

\medskip
\noindent
{\bf III $\Rightarrow$ IV:}
Suppose $\mathcal S_\III \subseteq \mathcal M_k$ is a an $\eps$-unrelated collection of cardinality at least $n/14$.
For each measure $\mu\in \mathcal S_{\III}$, let $Z_\mu\subseteq \{1,\ldots, k\}$ be the set of  $\ceiling{16\cdot\eps\left(14k({2\eps+{1\over n})}\right)}$ elements of $\{1,\ldots, k\}$ that has the largest measures with respect to $\mu$ (breaking ties arbitrarily).
Since ${\eps\le \frac 1 8}$, for all $\mu\in \mathcal S_\III$ we have
\begin{equation}\label{eq:l3:eps}
\mu_{}(Z_\mu)\ge \frac 1 8\left(16\cdot \eps(14k({2\eps+{1\over n})})\over 14k({2\eps+{1\over n})}\right)= 2\eps.
\end{equation}
Let $\mathcal S_{\IV}=\left\{{\mu\circ R_{Z_\mu}\over \mu({Z_\mu})} : \mu \in S_{\III}\right\}$.  Clearly $\mathcal S_{\IV}\subseteq \mathcal P_k$, and  $|\mathcal S_{\IV}| \geq {n\over 14}$. Moreover, by our construction for all $\bar \mu\in \mathcal S_{\IV}$,
 $|\supp(\bar \mu)|\leq \lceil 224\eps(2\eps+\frac 1 n) k)\rceil$. To complete the proof we need to show $\mathcal S_{\IV}$ is $\frac 1 2$-unrelated.
 Note that if $\mu,\nu\in \mathcal S_{\III}$, then Observation~\ref{obs:one} implies that
$$\min(\nu\circ R_{Z_{\nu}},\mu\circ R_{Z_{\mu}})([k])\overset{\eqref{eq:min}}={{{\mu({Z_{\mu}})+\nu({Z_{\mu}})}\over 2}-\|\mu \circ R_{ Z_{\nu}}-\mu \circ R_{ Z_{\nu}}\|_{TV}}\leq \eps.$$
Therefore,
\begin{align*}
\left\|{\mu \circ R_{ Z_\mu}\over \mu({Z_\mu})}- {\nu \circ R_{ Z_\nu}\over \nu({Z_\nu})}\right\|_{TV} &
\overset{\eqref{eq:min}}= 1- \min\left({\mu \circ R_{ Z_\mu}\over \mu({Z_\mu})}, {\nu \circ R_{ Z_\nu}\over \nu({Z_\nu})}\right)([k])\\
&\overset{\eqref{eq:l3:eps}}\geq 1-{{\min\left({\mu \circ R_{ Z_\mu}\over 2\eps}, {\nu \circ R_{ Z_\nu}\over 2\eps}\right)}}([k])
\overset{\eqref{eq:min}} \geq \frac 1 2\,,
\end{align*}
completing the proof.
\end{proof}

\section{Nearly equilateral sets in $\ell_1^d$}
\label{sec:kahane}

We will need the following result of Kahane \cite{Kahane81}.

\begin{theorem}\label{thm:kahane81}
For every $\e \in (0,1)$, there exists a mapping $K_{\e} : \mathbb R \to \ell_2^d$ such that $d \leq O(1/\e)$
and the following holds:  For every $x,y \in \mathbb R$,
$$
(1-\e) \sqrt{|x-y|} \leq \|K_{\e}(x)-K_{\e}(y)\|_2 \leq \sqrt{|x-y|}\,.
$$
\end{theorem}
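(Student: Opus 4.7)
My plan is Fourier-analytic. The starting point is the classical isometric embedding of the $\frac{1}{2}$-snowflake $(\mathbb{R},\sqrt{|x-y|})$ into $L_2$ given by $x \mapsto \mathbf{1}_{[0,x]}$. An equivalent formulation in the frequency domain, via the identity
\[
\int_0^\infty \frac{1-\cos(\omega h)}{\omega^2}\,d\omega = \frac{\pi|h|}{2},
\]
is the isometric map $x \mapsto \bigl(\omega \mapsto \tfrac{1}{\omega}(\cos(\omega x),\sin(\omega x))\bigr)$ into $L_2((0,\infty),d\omega)$ (up to normalization). The goal is to replace this by a $d$-point quadrature while losing no more than a $1-\eps$ factor on distances.

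Thus I would take
\[
K_\eps(x) = \sum_{j=1}^{d/2} \frac{\sqrt{w_j}}{\omega_j}\bigl(\cos(\omega_j x)\,e_{2j-1} + \sin(\omega_j x)\,e_{2j}\bigr),
\]
where $(\omega_j, w_j)$ is a carefully chosen quadrature so that $\sum_j (w_j/\omega_j^2)\sin^2(\omega_j h/2)$ approximates $\int_0^\infty \sin^2(\omega h/2)/\omega^2\,d\omega$ to relative error $\eps$ uniformly in $h > 0$. The one-sided upper estimate $\|K_\eps(x)-K_\eps(y)\|_2 \le \sqrt{|x-y|}$ (with appropriate normalization) is then obtained by realizing the quadrature as a Riemann sum from below.

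The central obstacle is that the integrand $\sin^2(\omega h/2)/\omega^2$ is concentrated in a band $\omega \in [c/|h|, C/|h|]$, and since $h$ ranges over all positive reals, no fixed finite set of nodes can be optimally placed for every $h$. To circumvent this I would exploit scale invariance: pick $\omega_j = r^j$ on a geometric grid with ratio $r = 1+\Theta(\eps)$ over $d = \Theta(1/\eps)$ consecutive values. The substitution $\omega\mapsto r\omega,\ h\mapsto h/r$ preserves the functional form of the sum, so proving a $(1\pm\eps)$-approximation on one bounded range of $h$ propagates to every scale through this $r$-multiplicative symmetry.

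The main technical challenge is controlling the tails, namely contributions from scales outside the chosen window and the boundary effects at the ends of the grid. A self-similar or periodic-on-log-scale construction, in which contributions from different cycles telescope via geometric decay in $r^{-d} = e^{-\Theta(d\eps)}$, should suffice. If a deterministic construction turns out to be unwieldy, a randomized alternative---introducing a uniform random logarithmic phase shift in the $\omega_j$, arguing the distortion in expectation, and extracting a single good realization by concentration on an $\eps$-net of $h$-values---should deliver the same bound. The whole plan hinges on the Riemann-sum estimate that $\omega \mapsto \sin^2(\omega h/2)/\omega^2$, viewed as a function of $\log\omega$, has bounded total variation uniformly in $h$, so that a log-uniform grid of spacing $\Theta(\eps)$ achieves relative error $O(\eps)$.
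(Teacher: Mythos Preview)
The paper does not give its own proof of this statement; it is quoted as a theorem of Kahane and used as a black box. So there is no in-paper argument to compare against, and the relevant question is simply whether your plan is viable.

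It is not, for a structural reason. The map you write down,
\[
K_\eps(x)=\sum_{j=1}^{d/2}\frac{\sqrt{w_j}}{\omega_j}\bigl(\cos(\omega_j x)\,e_{2j-1}+\sin(\omega_j x)\,e_{2j}\bigr),
\]
has \emph{bounded image}: every coordinate is bounded by $\sqrt{w_j}/\omega_j$, so $\|K_\eps(x)-K_\eps(y)\|_2^2\le 4\sum_j w_j/\omega_j^2$, a fixed constant independent of $x,y$. This already kills the lower bound $(1-\eps)\sqrt{|x-y|}$ once $|x-y|$ is large. More generally, any finite-dimensional \emph{helix}---a map whose squared increment $\psi(h)=\|K(x+h)-K(x)\|_2^2$ depends only on $h$---has $\psi(h)=a h^2+\sum_{j} c_j\bigl(1-\cos(\omega_j h)\bigr)$ with finitely many terms; such a $\psi$ is either bounded ($a=0$) or eventually quadratic ($a>0$), so it can never stay within a factor $1\pm\eps$ of $|h|$ for all $h\in\mathbb R$. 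The same obstruction bites at small scales on a bounded interval: with frequencies bounded by $\Omega$ one has $\psi(h)\sim C h^2$ for $h\ll 1/\Omega$, again violating the lower bound. Your ``periodic-on-log-scale'' and randomized variants do not escape this, because whatever folding you perform you are still producing a stationary-increment curve built from finitely many sinusoids; and the hoped-for decay $r^{-d}=e^{-\Theta(d\eps)}=e^{-\Theta(1)}$ is only a fixed constant, as your own computation shows, so the cross terms from collapsing an infinite lacunary grid into $d$ coordinates are not negligible.

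The upshot is that Kahane's curve is necessarily \emph{non}-stationary, and any successful construction must abandon the pure trigonometric quadrature template. If you want to reconstruct the result rather than cite it, one route is to first reduce (by rescaling and an Arzel\`a--Ascoli compactness argument) to embedding a bounded interval, and then use a genuinely non-translation-invariant construction---for instance coordinates modeled on the Schauder/L\'evy tent functions used to build Brownian motion, with scales folded modulo $d$. That is a different plan from the one you outline.
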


\begin{proof}[Proof of Theorem \ref{thm:kahane}]
Suppose that $f : U_n \to \ell_1^d$ is a $(1+\e)$-embedding scaled so that $f$ is $1$-Lipschitz.
Consider
the mapping $g : U_n \to \ell_2^{O(d/\e)}$ given by
$$
g(x) = \left(\vphantom{\bigoplus} K_{\e}(f(x)_i), K_{\e}(f_2(x)_i), \ldots, K_{\e}(f_d(x)_i)\right)\,,
$$
where $f(x)_i$ denotes the $i$th coordinate of $f(x)$.  By Theorem \ref{thm:kahane81}, for any $x,y \in U_n$, we have
$\|g(x)-g(y)\|^2_2 \leq \|f(x)-f(y)\|_1$.  On the other hand,
$$
\|g(x)-g(y)\|_2^2 \geq (1-\e)^2 \|f(x)-f(y)\|_1 \geq \frac{(1-\e)^2}{1+\e}\,,
$$
implying that $\|g(x)-g(y)\|_2 \geq (1-\e)/\sqrt{1+\e} \geq 1-2\e$.  Thus $g$ is a $(1+2\e)$-embedding
of $U_n$ into $\ell_2^{O(d/\e)}$.  But now by \cite{Alon03}, for $n$ sufficiently large, we have
$$
d \geq \frac{\Omega(\log n)}{\e \log \frac{1}{\e}}\,.
$$
\end{proof}

\section{Extension to $k$-ary trees}
\label{sec:kary}

We now prove Corollary \ref{cor:kary}.  Combining the next lemma with Theorem \ref{thm:main} yields the desired result.

\begin{lemma}\label{lem:reduction:2}
For $h, k\geq 2$, let $B_{h,k}$ be a complete $k$-ary tree of height $h$.
If $B_{h,k}$ admits a $(1+\e)$-embedding into $\ell_1^d$ for some $0 \leq \e \leq \frac18$, then
the $(1+k^{\lceil h/2\rceil})$-star admits a $(1+4\e)$-embedding into $\ell_1^d$.
\end{lemma}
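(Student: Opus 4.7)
The plan is to construct an embedding of the $(1+N)$-star (with $N = k^{\lceil h/2 \rceil}$) into $\ell_1^d$ of distortion $1+4\e$ from the given embedding $f: B_{h,k} \to \ell_1^d$. Let $m = \lceil h/2\rceil$, and normalize $f$ so that it is $1$-Lipschitz with $\|f(u)-f(v)\|_1 \geq (1-\e)\,d(u,v)$, where $d$ denotes the tree metric of $B_{h,k}$.

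The natural first attempt is to take $g(c) = f(r)$ and $g(\ell_i) = f(v_i)$, where $v_1, \dots, v_N$ are the depth-$m$ vertices of $B_{h,k}$. This matches the center-to-leaf distance at scale $m$, but fails at leaf-to-leaf: sibling depth-$m$ vertices have tree distance only $2$, whereas the star at scale $m$ requires $2m$. Any modification that keeps $g(\ell_i)$ within the $\ell_1$ ball of radius $O(\e m)$ around $f(v_i)$ (needed to preserve the center-leaf scale) is constrained by the triangle inequality to sibling distance at most $2+O(\e m)$, still far short of $2m$ in the small-$\e$ regime $\e \leq 1/8$ that the lemma targets. Conversely, moving $g(\ell_i)$ deep inside $T_{v_i}$ (say, to a leaf) rescales the target star to have center-leaf $\approx h$; but then sibling-subtree leaves are only at tree distance $2(h-m+1)$, giving the leaf-to-leaf ratio $(h+2)/h < 2$ after rescaling, again failing for small $\e$. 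Hence a successful construction must exploit more than raw vertex images.

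To succeed, I would work inside each subtree $T_{v_i}$ (a copy of $B_{\lfloor h/2\rfloor, k}$) hanging below $v_i$ and build a proxy point $p_i \in \ell_1^d$ from $f(T_{v_i})$ such that $\|p_i - f(r)\|_1 \approx L$ for a common scale $L$ and $\|p_i - p_j\|_1 \approx 2L$ uniformly in $i \neq j$, then set $g(c) = f(r)$ and $g(\ell_i) = p_i$. Natural candidates for the proxy include a uniformly random deep leaf of $T_{v_i}$, a weighted average of $f$ over the leaves of $T_{v_i}$, or a combination guided by the $\ell_1$ cut-decomposition of $f|_{T_{v_i}}$.

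The technical heart of the proof is ensuring that the proxies achieve pairwise distance $\approx 2L$ simultaneously for all sibling pairs, which effectively asks for many ``almost independent'' directions inside each sibling-subtree pair's image under $f$. I expect this step to require genuinely $\ell_1$-specific tools (cut or level-set arguments on the coordinates of $f$) rather than just the Lipschitz bound on $f$, and to be the main obstacle. The factor $4$ in the final distortion $1+4\e$ is consistent with composing a bounded number of $(1\pm\e)$ multiplicative errors arising from the original embedding, a rescaling step, the proxy construction, and the conversion of a one-sided into a two-sided distortion bound.
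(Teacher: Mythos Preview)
Your diagnosis of the naive attempts is accurate, but the proposal then heads in the wrong direction. You fix $g(c)=f(r)$ throughout and conclude that sophisticated $\ell_1$-specific proxy constructions (random leaves, averages, cut decompositions) are needed to manufacture large pairwise distances. In fact the paper's argument is completely elementary and uses only the triangle inequality in a normed space; no $\ell_1$ structure is invoked.

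The missing idea is this: do \emph{not} anchor the star center at $f(r)$. Instead set $g(c)=0$ and, for each depth-$m$ vertex $v$ with a chosen leaf $x_v$ below it, set the corresponding star leaf image to the \emph{difference vector}
\[
g(\ell_v) \;=\; \frac{f(x_v)-f(v)}{h-m}\,.
\]
Then $\|g(\ell_v)-g(c)\|_1=\|f(x_v)-f(v)\|_1/(h-m)\in[\tfrac{1}{1+\e},1]$ automatically. For the pairwise bound, use that in the tree $d(x_u,x_v)=2(h-m)+d(u,v)$, together with $d(x_u,x_v)\le(1+\e)\|f(x_u)-f(x_v)\|_1$ and the triangle inequality
\[
\|f(x_u)-f(x_v)\|_1 \le \|(f(x_u)-f(u))-(f(x_v)-f(v))\|_1 + \|f(u)-f(v)\|_1\,,
\]
plus $\|f(u)-f(v)\|_1\le d(u,v)\le 2m$. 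This yields $(1+\e)\|(f(x_u)-f(u))-(f(x_v)-f(v))\|_1\ge 2(h-m)-2\e m\ge(2-4\e)(h-m)$, using $m=\lceil h/2\rceil\le 2(h-m)$ for $h\ge 2$. After scaling, the pairwise distances are at least $(2-4\e)/(1+\e)$, giving distortion at most $(1+\e)/(1-2\e)\le 1+4\e$ for $\e\le 1/8$.

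So the gap in your proposal is a missing (and simple) idea---translating each subtree image to the origin via $f(x_v)-f(v)$---rather than a missing technical tool. Your anticipated ``main obstacle'' does not exist.
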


\begin{proof}
Suppose that $f : B_{h,k} \to \ell_1^d$ is a $(1+\e)$-embedding.
We may assume, without loss, that $f$ is $1$-Lipschitz.
Letting $n = (1+k^{\lceil h/2\rceil})$, we construct
an embedding $g : V_n \to \ell_1^d$ of the $n$-star as follows.

Let $r \in V_n$ denote the vertex of degree $n-1$.  We put $g(r)=0$.
Let $S$ be the set of vertices in $B_{h,k}$ at height $\lceil h/2\rceil$ (we use the convention that root has height zero).
For any vertex $v\in S$, pick an arbitrary leaf $x_v$ in the subtree rooted at $v$.
Associate to every vertex $w \in V_n \setminus \{r\}$ a distinct element $\tilde w \in S$ and put
$$
g(w) = \frac{f(x_{\tilde w}) - f(\tilde w)}{h- \lceil h/2\rceil}\,.
$$

Since $f$ is $1$-Lipschitz, the same holds for $g$.
Moreover for any two distinct elements $u,v\in S$ we have
\begin{align*}
2(h-\lceil h/2\rceil)+d_{B_{h,k}}(u,v)
&=d_{B_{h,k}}(u,v)+d_{B_{h,k}}(x_v,v)+d_{B_{h,k}}(x_u,u)\\
&=d_{B_{h,k}}(x_u,x_v)\\
&\leq (1+\eps)\|f(x_u)-f(x_v)\|_1\\
&\leq (1+\eps)\|(f(x_u)-f(u))-(f(x_v)-f(v))\|_1 \\ & \qquad + (1+\eps)\|f(u)-f(v)\|_1\\
&\leq (1+\eps)\|(f(x_u)-f(u))-(f(x_v)-f(v))\|_1\\ & \qquad + (1+\eps)\,d_{B_{h,k}}(u,v)\,.
\end{align*}
Therefore,
\begin{align*}
(1+\eps)\|(f(x_u)-f(u))-(f(x_v)-f(v))\|_1&\geq 2(h-\lceil h/2\rceil)- \eps d_{B_{h,k}}(u,v)\\
&\geq 2(h-\lceil h/2\rceil)- 2\eps \lceil h/2\rceil\\
&\geq 2(h-\lceil h/2\rceil)- 4\eps (h-\lceil h/2\rceil)\\
 &\geq (2-4\eps)(h-\lceil h/2\rceil).
\end{align*}
Since $\eps\leq 1/8$, the preceding inequality bounds the distortion of $g$ by
$
\frac {1+\eps} {1-2\eps}\leq 1+4\eps\,,
$
completing the proof.
\end{proof}

\subsection*{Acknowledgments}

We are grateful to Jiri Matou{\v{s}}ek for suggesting
the approach of Section \ref{sec:kahane}. 
This research was partially supported by NSF grant CCF-1217256.

\bibliographystyle{plain}
\bibliography{socg}

\end{document}